\theoremstyle{plain}
\newtheorem{theorem}{Theorem}[section]
\newtheorem{lemma}[theorem]{Lemma}
\theoremstyle{definition}
\newtheorem{remark}[theorem]{Remark}
\theoremstyle{remark}
 \numberwithin{equation}{section}
\begin{document}
\title{On Blow-up  of  a Seimilinear Heat Equation with Nonlinear Boundary Conditions}
\author{Maan A. Rasheed and Miroslav Chlebik}
\maketitle

\abstract 
This paper deals with the blow-up properties of the solutions of the semilinear heat equation $u_t=\Delta u+\lambda e^{pu}$ in $B_R \times (0,T)$ with the nonlinear boundary conditions 
$\frac{\partial u}{\partial \eta}=e^{qu}$ on $\partial B_R \times (0,T),$ where $B_R$ is a ball in $R^n,$ $\eta$ is the outward normal,   $p>0, q>0,~\lambda >0.$ The upper and lower blow-up rate estimates are established. It is also proved under some restricted assumptions, that the blow-up occurs only on the boundary. 

 \section{Introduction}
 
In this paper, we consider the initial-boundary value problem

\begin{equation}\label{B9} \left.
\begin{array}{ll}
u_t=\Delta u+\lambda e^{pu}, &\quad (x,t)\in B_R \times (0,T), \\ 
\frac{\partial u}{\partial \eta}=e^{qu},&\quad (x,t) \in\partial B_R \times (0,T),\\
u(x,0)=u_0(x),& \quad x\in B_R,\end{array} \right \} \end{equation} where $p>0, q>0,~\lambda >0,$  $B_R$ is a ball in $R^n,$  $\eta$ is the outward normal, $u_0$ is nonnegative, radially symmetric, nondecreasing, smooth function satisfies the conditions
\begin{eqnarray}\label{B4}
&&\frac{\partial u_0}{\partial \eta}=e^{qu_0}, \qquad x\in \partial \Omega,\\
&&\label{B4a}  \Delta u_0+\lambda e^{pu_0}\ge 0,\quad u_{0r}(|x|)\ge 0, \quad x\in \overline{\Omega}_R.\end{eqnarray}

The problem of the semilinear heat equation with nonlinear boundary conditions:

\begin{equation}\label{B1} \left.
\begin{array}{ll}
u_t=\Delta u+\lambda f(u), & \quad (x,t)\in \Omega \times (0,T), \\  \frac{\partial u}{\partial \eta}=g(u),&  \quad (x,t)\in \partial \Omega \times (0,T),\\
 u(x,0)=u_0(x),&\quad x\in {\Omega}, \end{array} \right\} \end{equation}
 
 has been studied by many authors (see for example \cite{49,50,51}). The crucial point of these works was the question whether the reaction term in the semilinear equation can prevent (affect) blow-up. For instance, in \cite{49} it has been studied the blow-up solutions of problem (\ref{B1}), where $\lambda<0$ and \begin{equation}\label{B5} f(u)=u^p,\quad g(u)=u^q, \quad p,q>1,\end{equation} for $n=1$ or $\Omega=B_R.$  Particularly, it was shown that the exponent $p=2q-1$ is critical for blow-up in the following sense:
\begin{enumerate}[(i)]
\item If $p<2q-1$ (or $p=2q-1$ and $-\lambda <q$), then there exist solutions, which blow up in finite time and the blow-up occurs only on the boundary.
\item If $p>2q-1$ (or $p=2q-1$ and $-\lambda >q$), then all solutions exist globally and are globally bounded.
\end {enumerate}   In \cite{31} J. D. Rossi has proved for the case (i), where $n=1,$ $\Omega =[0,1],$ that there exist positive constants $C,c$ such that the upper (lower) blow-up rate estimate take the following forms
  $$ c\le\max_{[0,1]}u(\cdot,t)(T-t)^{\frac{1}{2(q-1)}} \le C,\quad  0<t<T.$$

In \cite{51} it has been studied another special case of problem (\ref{B1}), where $\lambda=1,$ $f,g$ as in (\ref{B5}), $\Omega=[0,1]$ or it is a bounded domain with $C^2$ boundary, it was proved that the solutions of (\ref{B1}) exist globally if and only if $\max\{p,q\}\le 1,$ otherwise, every solution has to blow up in finite time. Moreover, the blow-up occurs only on the boundary. The blow-up rate estimate for this case has been studied in \cite{51,31}, for $n=1,\Omega=[0,1],$ it has been shown that there exist positive constants $c,C$ such that 
$$ c\le \max_{[0,1]}u(\cdot,t)(T-t)^{\alpha} \le C, \quad 0<t<T,$$ where $\alpha=1/(p-1)$ if $p \ge 2q-1,$ and $\alpha=1/[2(q-1)]$ if $p<2q-1.$ 

We observe that if $p<2q-1,$ then the nonlinear term at the boundary determines and gives the blow-up rate while, if $p>2q-1,$ then the reaction term in the semilinear equation dominates and gives the blow-up rate.

 Later, in \cite{50} it was considered a second  special case of (\ref{B1}), where $\lambda=-a, a>0,$ $f,g$ are of exponential forms, namely
\begin{equation}\label{B6} \left.
\begin{array}{ll}
u_t=\Delta u-ae^{pu},&\quad (x,t)\in\Omega \times (0,T), \\
 \frac{\partial u}{\partial \eta}=e^{qu},&\quad (x,t) \in\partial \Omega \times (0,T),\\
u(x,0)=u_0(x),& \quad x\in {\Omega}, \end{array} \right\} \end{equation}
where  $p,q >0,$ $u_0$ satisfies (\ref{B4}), (\ref{B4a}). 

It has been shown that in case of $\Omega$ is a bounded domain with smooth boundary, the critical exponent can be given as follows  
\begin{enumerate}[\rm(i)]
\item If $2q<p,$ the solutions of problem (\ref{B6}) are globally bounded.
\item If $2q>p,$ the solutions  of problem (\ref{B6}) blow up in finite time for large initial data.
\item If $2q=p,$ the solutions may blow up in finite time for large initial data. 
\end{enumerate}
Moreover, in case $\Omega=B_R,$ the blow-up occurs only on the boundary and there exist positive constants $c,C$ such that the upper (lower) blow-up rate estimate take the following form
$$\log C_1- \frac{1}{2q}\log(T-t)\le \max_{\overline{B}}u(\cdot,t)\le \log C_2- \frac{1}{2q}\log(T-t),\quad 0<t<T.$$
Therefore, the blow-up properties (blow-up location and bounds) of problem (\ref{B6}) are the same as that of problem (\ref{B6}), where $a=0,$ which has been considered in \cite{17}. 

In this paper, we study the blow-up solutions of problem (\ref{B9}). The upper (lower) blow-up rate estimates is obtained. Moreover, under some restricted assumptions, we prove that blow-up occurs only on the boundary.

\section{Preliminaries} 
Since $f(u)=\lambda e^{pu},~g(u)=e^{qu}$ are smooth functions, and problem (\ref{B9}) is uniformly parabolic, also $u_0$ satisfies the compatibility condition (\ref{B4}), it follows that the existence and uniqueness of local classical solutions to problem (\ref{B9}) are known by the standard theory \cite{37}. On the other hand, the nontrivial solutions of this problem blow up in finite time and the blow-up set contains $\partial B_R,$ and that due to comparison principle, \cite{21}, and the known blow-up result of problem (\ref{B9}), where $\lambda=0$ (see\cite{17}).

In this paper, we denote for simplicity $u(x,t)=u(r,t).$  The following lemma shows some properties of the classical solutions to problem (\ref{B9}). 
\begin{lemma}\label{sugar} Let $u$ be a classical solution to problem (\ref{B9}), where $u_0$ satisfies the assumptions (\ref{B4}), (\ref{B4a}).Then
\begin{enumerate}[\rm(i)]
\item$u> 0,$  radial  in  $\overline{B}_R\times (0,T).$ 
\item $u_r \ge 0,$ in $[0,R]\times [0,T).$
\item  $u_t >0$ in $\overline{B}_R \times (0,T).$
\end{enumerate}
\end{lemma}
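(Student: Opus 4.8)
The plan is to treat the three assertions in the order they are stated, getting radial symmetry and positivity first, then the spatial monotonicity, and finally the temporal monotonicity, which I expect to be the delicate one because of the sign of the flux boundary condition.

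For (i), radial symmetry I would obtain from uniqueness: for any rotation $O\in SO(n)$ the function $u(Ox,t)$ solves the same problem (\ref{B9}) with the same radial data $u_0$, so by uniqueness it coincides with $u$; hence $u$ depends on $x$ only through $r=|x|$ and we may write $u=u(r,t)$. For nonnegativity I would compare $u$ with the constant subsolution $v\equiv0$: since $v_t-\Delta v-\lambda e^{pv}=-\lambda<0$ and $\partial v/\partial\eta-e^{qv}=-1<0$ while $v(\cdot,0)=0\le u_0$, the comparison principle \cite{21} yields $u\ge0$. Strict positivity for $t>0$ then follows from the strong maximum principle: at an interior zero the identity $u_t-\Delta u=\lambda e^{pu}>0$ would be violated, and at a boundary zero Hopf's lemma would contradict the strictly positive flux $\partial u/\partial\eta=e^{qu}>0$.

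For (ii), write the equation in radial form $u_t=u_{rr}+\frac{n-1}{r}u_r+\lambda e^{pu}$ and set $w=u_r$. Differentiating in $r$ gives the linear equation
\[ w_t=w_{rr}+\frac{n-1}{r}w_r+\Big(\lambda p\,e^{pu}-\frac{n-1}{r^2}\Big)w, \]
with data $w(r,0)=u_{0r}\ge0$ by (\ref{B4a}), $w(0,t)=0$ (smoothness and radial symmetry), and $w(R,t)=u_r(R,t)=e^{qu(R,t)}>0$ from the boundary condition. Fixing $t_0<T$ and setting $\tilde w=e^{-Kt}w$ with $K>\lambda p\max_{\overline{B}_R\times[0,t_0]}e^{pu}$ turns the zeroth-order coefficient into $\lambda p\,e^{pu}-\frac{n-1}{r^2}-K\le0$, so a negative interior minimum of $\tilde w$ on $(0,R)\times(0,t_0]$ is impossible; since $\tilde w\ge0$ on the remaining parabolic boundary ($t=0$, $r=0$, $r=R$), I conclude $w\ge0$ on $[0,R]\times[0,T)$. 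The singular term $-\frac{n-1}{r^2}$ is of the favorable sign, and the origin causes no difficulty because the value $w(0,t)=0$ is prescribed there.

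The heart of the matter is (iii). Differentiating (\ref{B9}) in $t$, the function $z=u_t$ satisfies $z_t=\Delta z+\lambda p\,e^{pu}z$ with the boundary condition $\partial z/\partial\eta=q\,e^{qu}z$ and $z(\cdot,0)=\Delta u_0+\lambda e^{pu_0}\ge0$ by (\ref{B4a}). The obstacle is that this Robin condition has the \emph{wrong} sign for a direct maximum principle: at a putative negative boundary minimum, Hopf's lemma and the boundary relation are mutually consistent, so nonnegativity of $z$ cannot be read off directly. I would instead exploit that (\ref{B9}) is autonomous. The assumptions (\ref{B4}) and (\ref{B4a}) say precisely that the time-independent function $u_0$ is a subsolution of (\ref{B9}) (the compatibility condition makes the flux match and $\Delta u_0+\lambda e^{pu_0}\ge0$ makes the interior inequality hold), so comparison gives $u(\cdot,t)\ge u_0$ for all $t$. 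In particular $u(\cdot,h)\ge u_0=u(\cdot,0)$ for every $h>0$; comparing the two solutions $u(\cdot,\cdot+h)$ and $u(\cdot,\cdot)$ of the same autonomous problem then yields $u(x,t+h)\ge u(x,t)$, whence $u_t\ge0$. Finally, since $z=u_t\ge0$ solves a linear parabolic equation and is not identically zero, the strong maximum principle gives $z>0$ in $B_R\times(0,T)$, and on $\partial B_R$ the Hopf argument now \emph{does} close: if $z(x_0,t_0)=0$ there, Hopf forces $\partial z/\partial\eta(x_0,t_0)<0$, contradicting $\partial z/\partial\eta=q\,e^{qu}z=0$ at that point. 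Hence $u_t>0$ on $\overline{B}_R\times(0,T)$.
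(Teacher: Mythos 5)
The paper states Lemma \ref{sugar} in the Preliminaries \emph{without proof} (it is implicitly treated as standard, of the kind established for related problems in \cite{50,17}), so there is no in-paper argument to compare yours against; your proposal supplies the missing proof, and it is correct. Parts (i) and (ii) are the standard arguments: rotation invariance plus uniqueness for radial symmetry; comparison with the zero subsolution, the strong maximum principle at interior points, and Hopf's lemma against the strictly positive flux at boundary points for $u>0$; and the minimum principle applied to $w=u_r$ (after the exponential-in-time rescaling $\tilde w=e^{-Kt}w$) for $u_r\ge 0$, where you correctly observe that the singular coefficients $\frac{n-1}{r}$ and $-\frac{n-1}{r^2}$ cause no harm at an interior negative minimum and that $w(0,t)=0$ is prescribed. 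In part (iii) you correctly identify the genuine subtlety: the linearized boundary condition $\partial z/\partial\eta=qe^{qu}z$ for $z=u_t$ has the sign for which a negative boundary minimum is \emph{consistent} with Hopf's lemma, so the naive argument fails; your workaround via time-translation invariance (first $u(\cdot,t)\ge u_0$ because (\ref{B4}), (\ref{B4a}) make the stationary function $u_0$ a subsolution of (\ref{B9}), then comparison of $u(\cdot,\cdot+h)$ with $u(\cdot,\cdot)$, using the comparison principle of \cite{21}) is exactly the standard device for such monotonicity statements. The one step you assert without justification is that $z=u_t$ is not identically zero, which your strong maximum principle argument requires: if $u_t\equiv 0$ on some time slab, then $u\equiv u_0$ there would be a stationary solution, i.e.\ $\Delta u_0+\lambda e^{pu_0}=0$ in $B_R$ with $\partial u_0/\partial\eta=e^{qu_0}$, and integrating over $B_R$ and using the divergence theorem gives $0=\int_{\partial B_R}e^{qu_0}\,ds_y+\lambda\int_{B_R}e^{pu_0}\,dy>0$, a contradiction. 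Adding that one line closes part (iii) and makes the proof complete.
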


\section{Blow-up Rate Estimates}
Since $u_r \ge 0,$ in $[0,R] \times(0,T),$ 
it follows that $$\max_{\overline{B}_R}u(\cdot,t)=u(R,t),\quad 0<t<T.$$ Therefore, it is sufficient to derive the upper (lower) bounds of blow-up rate for $u(R,t).$
\begin{theorem}\label{pole}
Let $u$ be a solution to problem (\ref{B9}), where $u_0$ satisfies the assumptions (\ref{B4}), (\ref{B4a}), $T$ is the blow-up time.Then there is a positive constant $c$ such that
$$\log c-\frac{1}{2\alpha}\log(T-t)\le u(R,t),\quad t \in (0,T),$$ where $\alpha=\max\{p,q\}.$ \end{theorem}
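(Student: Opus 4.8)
The plan is to reduce the theorem to a single first-order differential inequality for the boundary value $M(t):=u(R,t)$ and then integrate it. By Lemma~\ref{sugar}(ii) the spatial maximum is attained at $r=R$, so $M(t)=\max_{\overline B_R}u(\cdot,t)$; moreover $M$ is locally Lipschitz on $(0,T)$ with $M'(t)=u_t(R,t)$ wherever it is differentiable, and $M(t)\to\infty$ as $t\to T$. The goal is the estimate
\[
M'(t)\le C\,e^{2\alpha M(t)}\qquad\text{for }t\text{ near }T .
\]
Granting this, set $y(t):=e^{-2\alpha M(t)}>0$; then $y'(t)=-2\alpha M'(t)e^{-2\alpha M(t)}\ge-2\alpha C$, and since $y(t)\to0$ as $t\to T$, integrating over $(t,T)$ gives $y(t)\le 2\alpha C\,(T-t)$. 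Hence $e^{2\alpha M(t)}\ge\big(2\alpha C(T-t)\big)^{-1}$, and taking logarithms with $c:=(2\alpha C)^{-1/(2\alpha)}$ yields $u(R,t)=M(t)\ge\log c-\frac{1}{2\alpha}\log(T-t)$, as claimed.

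Since $u_t$ is continuous up to $\partial B_R$, I would derive the differential inequality from the pointwise bound
\[
u_t\le C\big(|\nabla u|^2+e^{pu}\big)\qquad\text{in }\overline B_R\times(0,T).
\]
Indeed, evaluating at $r=R$ and using the boundary condition of \eqref{B9}, namely $|\nabla u|=u_r(R,t)=e^{qu(R,t)}=e^{qM(t)}$, gives $M'(t)=u_t(R,t)\le C\big(e^{2qM(t)}+e^{pM(t)}\big)$. Because $p,q\le\alpha$ and $M(t)>0$ by Lemma~\ref{sugar}(i), both exponentials are dominated by $e^{2\alpha M(t)}$, which delivers the required inequality. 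Equivalently, in radial variables $M'(t)=u_{rr}(R,t)+\frac{n-1}{R}e^{qM}+\lambda e^{pM}$, so the whole matter rests on bounding the second radial derivative $u_{rr}(R,t)$ from above by a multiple of $e^{2qM}$.

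The main obstacle is exactly this gradient-type estimate, which is delicate because $r=R$ is not an interior extremum and the sign of $u_{rr}(R,t)$ is not prescribed. I would establish it by the maximum principle applied to an auxiliary function of the form $J=u_t-a|\nabla u|^2-b\,e^{pu}$, following the scheme used for the companion problem \eqref{B6} in \cite{50}. The key point, and the reason the gradient must appear, is that comparing $u_t$ directly with a pure function $e^{\beta u}$ of $u$ fails: applying the heat operator to $e^{\beta u}$ produces the term $-\beta^2e^{\beta u}|\nabla u|^2$ of the wrong sign, which is large precisely where blow-up occurs. Carrying out the computation requires the Bochner-type identity for $|\nabla u|^2$, a suitable choice of the constants $a,b$ so that the parabolic operator on $J$ has a favorable sign, and careful treatment of the boundary terms, where $\partial_\eta|\nabla u|^2$ is rewritten through the boundary condition $u_r=e^{qu}$. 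Once $J\le0$ is shown (for large $u$, the regime relevant near $T$), the pointwise bound and hence the theorem follow from the integration carried out above.
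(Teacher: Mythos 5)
Your reduction and integration steps are sound: if one knew
\[
u_t(R,t)\le C\,e^{2\alpha u(R,t)}\qquad\text{for }t\text{ near }T,
\]
then setting $y=e^{-2\alpha u(R,\cdot)}$ and using $y(t)\to 0$ as $t\to T$ does yield the claimed lower estimate. But this differential inequality is the entire content of the theorem, and you do not prove it: you name it yourself as ``the main obstacle'' and then only outline a scheme (maximum principle for $J=u_t-a|\nabla u|^2-b\,e^{pu}$, Bochner identity, ``suitable choice of $a,b$'') without carrying out any computation. Worse, the computation does not have the favorable sign you assert. Since $(\partial_t-\Delta)|\nabla u|^2=2\lambda p e^{pu}|\nabla u|^2-2|D^2u|^2$, $(\partial_t-\Delta)u_t=\lambda p e^{pu}u_t$ and $(\partial_t-\Delta)e^{pu}=\lambda p e^{2pu}-p^2e^{pu}|\nabla u|^2$, one finds
\[
\bigl(\partial_t-\Delta-\lambda p e^{pu}\bigr)J=(bp^2-a\lambda p)\,e^{pu}|\nabla u|^2+2a\,|D^2u|^2 ,
\]
and the term $+2a|D^2u|^2$ (with $a>0$) has exactly the wrong sign for concluding $J\le 0$ from the maximum principle; there is no a priori way to absorb $|D^2u|^2$ into $e^{pu}|\nabla u|^2$. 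The boundary analysis is likewise circular as described: on $\partial B_R$ one gets $\partial_\eta J=qe^{qu}u_t-2a\,e^{qu}u_{rr}(R,t)-bp\,e^{(p+q)u}$, which involves $u_{rr}(R,t)$ --- precisely the quantity you set out to control. Finally, the attribution to \cite{50} is off: what this paper takes from \cite{50}, and uses to prove this very theorem, is a heat-kernel representation, not a gradient estimate.

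The paper's own proof goes by that entirely different route and avoids the obstacle altogether: it writes $u$ through the fundamental solution $\Gamma$ as in (\ref{Sunday}), bounds the volume, surface and double-layer kernels by standard estimates, obtains $M(t)\le M(z)+C^*e^{\alpha M(t)}\sqrt{T-z}$ for $t$ close to $T$, and then, choosing $t>z$ with $M(t)-M(z)=C_0>0$ (possible since $M$ is increasing and unbounded), extracts $C_0\le C^*e^{\alpha M(z)+\alpha C_0}\sqrt{T-z}$, which is the desired bound at time $z$. This needs nothing beyond (\ref{B4}), (\ref{B4a}). It is telling that the paper \emph{does} also prove the same estimate by a maximum-principle argument close in spirit to yours (Theorem \ref{pole1}, with the auxiliary function $u_r-\frac{r}{R}e^{u}$), but only under the additional hypotheses $q\ge 1$ and (\ref{FH}) on $u_0$, which are needed to make the initial and boundary sign conditions of the auxiliary function work. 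So the inequality you leave unproven is genuinely not routine under the hypotheses of Theorem \ref{pole} alone, and as it stands your argument has a gap at its core step.
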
 
\begin{proof}
Define $$M(t)=\max_{\overline{B}_R}u(\cdot,t)=u(R,t),\quad \mbox{for}\quad t \in [0,T).$$ Clearly, $M(t)$ is increasing in $(0,T)$ (due to $u_t> 0,$ for $t \in (0,T),~ x \in \overline B_R$). As in \cite{50}, for $0< z<t<T, x \in B_R,$  the integral equation of problem (\ref{B9}) with respect to $u,$ can be written as follows 
\begin{eqnarray}\label{Sunday} u(x,t)&=&\int_{B_R} \Gamma(x-y,t-z)u(y,z)dy+\lambda\int_z^t \int_{B_R} \Gamma(x-y,t-\tau)e^{pu(y,\tau)}dy d\tau \nonumber \\  &&+\int_z^t \int_{S_R} \Gamma(x-y,t-\tau)e^{qu(y,\tau)}ds_y d\tau \nonumber\\ &&-\int_z^t \int_{S_R} u(y,\tau)\frac{\partial \Gamma}{\partial \eta_y}(x-y,t-\tau)ds_y d\tau,\end{eqnarray}
where $\Gamma$ is the fundamental solution of the heat equation, namely 
\begin{equation}\label{op}\Gamma(x,t)=\frac{1}{(4\pi t)^{(n/2)}}\exp[-\frac {|x|^2}{4t}] .\end{equation} 
Since $u(y,t)\le u(R,t) $ for $y\in \overline{B}_R,$  so, the last equation becomes
\begin{eqnarray*}u(x,t)&\le& u(R,z) \int_{B_R} \Gamma(x-y,t-z)dy+\lambda\int_z^t e^{pu(R,\tau)}\int_{B_R} \Gamma(x-y,t-\tau)dy d\tau .\\
 &&+\int_z^t e^{qu(R,\tau)} \int_{S_R} \Gamma(x-y,t-\tau)ds_y d\tau\\ &&+\int_z^t u(R,\tau) \int_{S_R}\mid \frac{\partial \Gamma}{\partial \eta_y}(x-y,t-\tau)\mid ds_y d\tau.\end{eqnarray*} 
Since u is a continuous function on $\overline{B}_R,$ the last inequality leads to  
\begin{eqnarray}\label{B12} M(t)&\le& M(z) \int_{B_R} \Gamma(x-y,t-z)dy+\lambda e^{pM(t)} \int_z^t \int_{B_R} \Gamma(x-y,t-\tau)dy d\tau\nonumber \\
 &&+e^{qM(t)}  \int_z^t \int_{S_R} \Gamma(x-y,t-\tau)ds_y d\tau \nonumber \\ &&+M(t)\int_z^t  \int_{S_R}\mid \frac{\partial \Gamma}{\partial \eta_y}(x-y,t-\tau)\mid ds_y d\tau.\end{eqnarray}  
It is known from  \cite{23,21} that for $0<t_1<t_2,~x,y \in R^n,$ $\Gamma$ satisfies
$$ \int_{B_R} \Gamma(x-y,t_2-t_1)dy \le 1.$$
Moreover, there exist positive constants $k_1,k_2$ such that   
\begin{eqnarray*}&&\Gamma(x-y,t_2-t_1) \le \frac{k_1}{{(t_2-t_1)}^{\mu_0}}\cdot\frac{1}{|x-y|^{n-2+\mu_0}},\quad  0<\mu_0<1,\\
&& \mid \frac{\partial \Gamma}{\partial \eta_{y}}(x-y,t_2-t_1)\mid \le \frac{k_2}{(t_2-t_1)^{\mu}}\cdot\frac{1}{|x-y|^{n+1-2\mu-\sigma}}, \quad \sigma\in (0,1),~ \mu \in (1-\frac{\sigma}{2},1).\end{eqnarray*}

If we choose $\mu_0=1/2,$ then from \cite{23},  there exist positive constants $d_1,d_2$ such that 
$$\int_{S_R}\frac{ds_y}{|x-y|^{n-2+\mu_0}} \le d_1,\qquad \int_{S_R} \frac{ds_y}{|x-y|^{n+1-2\mu-\sigma}}\le d_2.$$
From above it follows that there exist $C_1,C_2>0$ such that, the inequality (\ref{B12}) becomes
$$M(t)\le M(z)+\lambda e^{pM(t)}(t-z)+C_1e^{qM(t)}\sqrt{t-z}+C_2M(t)(t-z)^{1-\mu}.$$ 
Since $t-z \le T-z,$ it follows that \begin{equation}\label{tmr}M(t)\le M(z)+\lambda e^{pM(t)}\sqrt{T-z}+C_1e^{qM(t)}\sqrt{T-z}+C_2M(t)(T-z)^{1-\mu},\end{equation} 
provided $(T-z)\le 1.$ 

Clearly, 
$$\frac{M(t)}{e^{\alpha M(t)}}\longrightarrow 0,\quad \mbox{when}\quad t\rightarrow T.$$ Thus $$\frac{M(t)}{e^{\alpha M(t)}} \le (T-z)^{\frac{1}{2}-(1-\mu)},~ \mbox{for}~ t~\mbox{close to}~ T.$$ 

Therefore, the inequality (\ref{tmr}) becomes  $$M(t)\le M(z)+\lambda e^{pM(t)}\sqrt{T-z}+C_1e^{qM(t)}\sqrt{T-z}+C_2e^{ \alpha M(t)}\sqrt{T-z},$$ 
 
thus there is a constant $C^*$ such that
$$M(t)\le M(z)+C^*e^{\alpha M(t)}\sqrt{T-z}, \quad z<t<T,~t~\mbox{close to}~ T.$$ 
For any $z$ close to $T,$ we can choose $z<t<T$ such that $$M(t)-M(z)=C_0>0,$$ 
which implies $$C_0\le C^*e^{\alpha M(z)+\alpha C_0}\sqrt{T-z}.$$
Thus
$$\frac{C_0}{C^*e^{(\alpha C_0)}\sqrt{T-z}}\le e^{\alpha u(R,z)}.$$ Therefore, there exist  a positive constant $c$ such that $$\log c-\frac{1}{2\alpha}\log(T-t)\le u(R,t),\quad t \in (0,T).$$
\end{proof}

The next theorem shows similar results to Theorem \ref{pole} with  adding more restricted assumptions on $q$ and $u_0.$ The proof relies on the maximum principle rather than the integral equation. 

\begin{theorem}\label{pole1}
Let $u$ be a solution to problem (\ref{B9}), where $q\ge1,$ $T$ is the blow-up time, $u_0$ satisfies the assumptions (\ref{B4}), (\ref{B4a}), moreover, it satisfies the following condition
\begin{equation}\label{FH} u_{0r}(r)-\frac{r}{R}e^{u_0(r)}\ge 0, \quad r\in [0,R].\end{equation} Then there is a positive constant $c$ such that
$$\log c-\frac{1}{2\alpha}\log(T-t)\le u(R,t),\quad t \in (0,T),$$ where $\alpha=\max\{p,q\}.$ \end{theorem}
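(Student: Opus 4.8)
The plan is to prove the same lower bound as Theorem~\ref{pole}, but now using the parabolic maximum principle together with the extra structural hypothesis~(\ref{FH}), rather than the integral-equation machinery. Working in radial coordinates, write the equation as $u_t=u_{rr}+\frac{n-1}{r}u_r+\lambda e^{pu}$ on $[0,R)\times(0,T)$ with boundary condition $u_r(R,t)=e^{qu(R,t)}$. The idea is to introduce an auxiliary function $J$ built from $u_r$ and an exponential in $u$, chosen so that the hypothesis~(\ref{FH}) says exactly that $J\ge0$ at $t=0$, and so that $J$ satisfies a parabolic differential inequality of the form $\mathcal{L}J\le0$ (where $\mathcal{L}$ is the linearized heat operator) allowing the maximum principle to propagate $J\ge0$ for all $t\in[0,T)$. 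First I would try the candidate
$$J(r,t)=u_r(r,t)-\frac{r}{R}e^{u(r,t)},$$
so that~(\ref{FH}) is precisely $J(\cdot,0)\ge0$, and examine what differential equation $J$ solves.

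The key computational step is to differentiate the PDE with respect to $r$ to get the equation satisfied by $v:=u_r$, and then compute $J_t-J_{rr}-\frac{n-1}{r}J_r$ using the PDE for $u$ and $v$. I expect the terms to organize so that, after substituting $u_r=J+\frac{r}{R}e^{u}$ wherever $u_r$ appears, the inhomogeneous and zeroth-order contributions combine into an expression of the form $b(r,t)J + (\text{manifestly nonnegative leftover})$ on the right-hand side, giving an inequality $J_t-J_{rr}-\frac{n-1}{r}J_r - b(r,t)J \ge 0$ (or $\le 0$, with signs arranged to suit the comparison). Next I would check the boundary conditions for $J$: at $r=R$ the boundary relation $u_r(R,t)=e^{qu(R,t)}$ together with $q\ge1$ and $u>0$ (Lemma~\ref{sugar}) should force the sign $J(R,t)=e^{qu(R,t)}-e^{u(R,t)}\ge0$ since $q\ge1$ makes $qu\ge u$; at $r=0$ radial symmetry gives $u_r(0,t)=0$ and the factor $r/R$ vanishes, so $J(0,t)=0$. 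With $J\ge0$ on the parabolic boundary and $\mathcal{L}J\ge0$ in the interior, the maximum principle yields $J\ge0$ throughout, i.e.
$$u_r(r,t)\ge \frac{r}{R}e^{u(r,t)},\qquad (r,t)\in[0,R]\times[0,T).$$

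Once this gradient estimate is in hand, I would evaluate it at $r=R$ to relate $u_r(R,t)$ to $e^{u(R,t)}$, and then feed it back into an ordinary differential inequality for $M(t)=u(R,t)$. Integrating the equation over $B_R$ (or integrating the flux $u_r(R,t)=e^{qM(t)}$ against the $J\ge0$ bound) should produce a closed inequality of the form $M'(t)\le C e^{2\alpha M(t)}$ or its integrated version, whose integration from $t$ to $T$ yields the desired $\log c-\frac{1}{2\alpha}\log(T-t)\le M(t)$. The main obstacle I anticipate is the first step: verifying that the chosen $J$ actually satisfies a clean differential inequality with controllable sign, since the exponential reaction term $\lambda e^{pu}$ and the curvature term $\frac{n-1}{r}u_r$ both contribute cross terms when differentiated, and it may be necessary to adjust the ansatz (for instance, inserting a constant or a $t$-dependent factor in front of the exponential, or using $e^{qu}$ instead of $e^{u}$) so that the leftover terms retain a definite sign; the role of the hypothesis $q\ge1$ is precisely to guarantee the correct sign at the boundary and must be used carefully there.
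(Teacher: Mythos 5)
Your first half coincides exactly with the paper's proof: the paper uses your auxiliary function $J(r,t)=u_r(r,t)-\frac{r}{R}e^{u(r,t)}$, observes that (\ref{FH}) says $J(\cdot,0)\ge 0$, checks $J(0,t)\ge 0$ and the sign of $J(R,t)=e^{qu(R,t)}-e^{u(R,t)}$ via the boundary condition, derives
$$J_t-\Delta J-bJ=\frac{r}{R}e^{u}u_r^2+\frac{2}{R}e^{u}u_r\ge 0,\qquad b=\lambda p e^{pu}-\frac{n-1}{r^2},$$
and concludes $J\ge 0$ in $B_R\times(0,T)$ by the maximum principle. Up to this point your plan is sound and is the paper's argument.

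The gap is in how you pass from $J\ge 0$ to the rate estimate. Evaluating $J\ge 0$ at $r=R$ yields only $e^{qu(R,t)}\ge e^{u(R,t)}$, which is vacuous (it follows from $q\ge 1$, $u>0$); the boundary condition already prescribes $u_r(R,t)$ exactly, so the \emph{value} of $J$ at $r=R$ carries no new information. Likewise, integrating the equation over $B_R$ gives bounds on $\frac{d}{dt}\int_{B_R}u$ in terms of $e^{\alpha M(t)}$, but this does not close into a differential inequality for $M(t)=u(R,t)$, since $\int u$ does not control $M$ appropriately. What the paper actually extracts from $J\ge 0$ is the boundary-point (Hopf-type) consequence: because $J\ge 0$ in the interior while $J$ vanishes on the lateral boundary (the paper asserts $J(R,t)=0$; this is precisely where the boundary identity $u_r(R,t)=e^{qu(R,t)}$ enters), the outward normal derivative satisfies $\partial J/\partial\eta|_{r=R}\le 0$, i.e.
$$u_{rr}(R,t)\le \Big[\frac{r}{R}e^{u}u_r+\frac{1}{R}e^{u}\Big]_{r=R}.$$
Substituting this upper bound on $u_{rr}(R,t)$ into the PDE $u_t=u_{rr}+\frac{n-1}{r}u_r+\lambda e^{pu}$ at $r=R$, and using $u_r(R,t)=e^{qu(R,t)}$, $u>0$ and $1+q\le 2q\le 2\alpha$, gives the ordinary differential inequality $u_t(R,t)\le Ce^{2\alpha u(R,t)}$, which integrates from $t$ to $T$ (using that $u$ blows up at $r=R$) to the claimed lower bound. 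This normal-derivative step is the engine of the proof: a \emph{lower} bound on the blow-up rate requires an \emph{upper} bound on $u_t(R,t)$, and nothing in your proposal — neither the pointwise inequality $u_r\ge\frac{r}{R}e^{u}$ nor integration over the ball — produces one.
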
 

\begin{proof}
Define the functions $J$  as follows:  
$$J(x,t)=u_r(r,t)-\frac{r}{R}e^{u(r,t)}, \quad x\in B_R\times (0,T).$$ 

A direct calculation shows  
\begin{eqnarray*} J_{t}&=&u_{rt}-\frac{r}{R}e^{u}[u_{rr}+\frac{n-1}{r}u_r+\lambda pe^{pu}],\\
J_{r}&=&u_{rr}-\frac{r}{R}e^{u}u_r-\frac{1}{R}e^{u},\\
J_{rr}&=&[u_{rt}-\frac{n-1}{r}u_{rr}+\frac{n-1}{r^2}u_r-\lambda pe^{pu}u_r]\\ &&-\frac{r}{R}[e^{u}u_{rr}+e^{u}u_r^2]-\frac{2}{R}e^{u}u_r.\end{eqnarray*}
From above it follows that \begin{eqnarray*} J_{t}-J_{rr}-\frac{n-1}{r}J_{r}=-\frac{n-1}{r^2}[u_r-\frac{r}{R}e^{u}]+\lambda pe^{pu}[u_r-\frac{r}{R}e^{u}]+\frac{r}{R}e^{u}u_r^2+\frac{2}{R}e^{u}u_r. \end{eqnarray*}
Thus $$J_{t}-\Delta J-bJ=\frac{r}{R}e^{u}u_r^2+\frac{2}{R}e^{u}u_r\ge 0,$$ for $(x,t) \in B_R \times (0,T)\cap \{r>0\},$ 
where $b=[\lambda pe^{pu}-\frac{n-1}{r^2}].$ 

Clearly, from (\ref{FH}), it follows that  $$J(x,0)\ge 0, \quad x \in B_R,$$ and $$J(0,t)=u_r(0,t)\ge 0,~J(R,t)=0\quad t \in (0,T).$$ Since $$\sup_{(0,R)\times (0,t]} b<\infty,\quad \mbox{for}\quad t<T,$$ from above and maximum principle \cite{2}, it follows that
$$J\ge 0,\quad (x,t)\in B_R \times (0,T).$$ Moreover, 
$$\frac{\partial J}{\partial \eta}|_{\partial B_R}\le 0.$$
This means 
$$(u_{rr}-\frac{r}{R}e^{u}u_r-\frac{1}{R}e^{u})|_{\partial B_R}\le 0.$$ 
Thus $$u_t\le (\frac{n-1}{r}u_r+\lambda pe^{pu}+e^{u}u_r+\frac{1}{R}e^{u})|_{\partial B_R}.$$ which implies that
 $$u_t(R,t)\le \frac{n-1}{R}e^{qu(R,t)}+\lambda pe^{pu(R,t)}+e^{(1+q)u(R,t)}+\frac{2}{R}e^{u(R,t)},\quad t \in (0,T).$$

Thus, there exist a constant $C$ such that $$u_t(R,t)\le Ce^{2\alpha u(R,t)},\quad t \in (0,T).$$ Integrate this inequality from $t$ to $T$ and since $u$ blows up at $R,$ it follows 
$$\frac{c}{(T-t)^{\frac{1}{2}}}\le e^{\alpha u(R,t)},\quad t \in (0,T)$$  
or $$\log{c}-\frac{1}{2\alpha}\log (T-t)\le u(R,t),\quad t \in (0,T).$$

\end{proof}

\begin{remark}
From Theorems \ref{pole} and \ref{pole1} we conclude that, when $q>p$ the boundary term plays the dominating role and the lower blow-up rate takes the form: $$\log c-\frac{1}{2q}\log(T-t)\le u(R,t),\quad t \in (0,T),$$ moreover, this estimate is coincident with lower blow-up rate estimate of problem (\ref{B9}), where $\lambda=0,$ which has been considered in \cite{17},
while when $p>q$ the reaction term is dominated and gives the lower blow-up rate as follows $$\log c-\frac{1}{2p}\log(T-t)\le u(R,t),\quad t \in (0,T).$$
\end{remark} 

We next consider the upper bound 
\begin{theorem}\label{poll}
Let $u$ be a solution of problem (\ref{B9}), where $T$ is the blow-up time, $u_0$ satisfies the assumptions (\ref{B4}), (\ref{B4a}) moreover, assume that
\begin{equation}\label{Batre}\Delta u_0+f(u_0)\ge a>0,\quad \mbox{in}~\overline{B}_R.\end{equation}
Then there is a positive constant $C$ such that
\begin{equation}\label{polll} u(R,t)\le \log {C}-\frac{1}{q}\log (T-t),\quad t \in (0,T).\end{equation}
\end{theorem}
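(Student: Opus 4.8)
The plan is to deduce the upper bound from a pointwise lower estimate for the time derivative of the shape $u_t\ge\delta e^{qu}$, and then to integrate this along the boundary. Indeed, writing $M(t)=u(R,t)=\max_{\overline B_R}u(\cdot,t)$, once we know $M'(t)=u_t(R,t)\ge\delta e^{qM(t)}$, we multiply by $e^{-qM(t)}$ and integrate from $t$ to $T$; since $M(t)\to\infty$ as $t\to T$ the endpoint at $T$ drops out and we obtain
$$\tfrac1q e^{-qM(t)}\ge\delta(T-t),$$
which is exactly (\ref{polll}) with $C=(q\delta)^{-1/q}$. So the whole problem is to manufacture the lower bound $u_t\ge\delta e^{qu}$, and the role of the extra hypothesis (\ref{Batre}) is to guarantee it at $t=0$: since $u_t(\cdot,0)=\Delta u_0+\lambda e^{pu_0}\ge a>0$ and $u_0$ is bounded on $\overline B_R$, we have $u_t(\cdot,0)\ge\delta_0 e^{qu_0}$ with $\delta_0:=a\,e^{-q\max u_0}>0$.

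To propagate this to all $t\in(0,T)$ I would not work with $u_t-\delta e^{qu}$ directly but with the rescaled quantity $\Phi:=e^{-qu}u_t$, so that the target reads $\Phi\ge\delta_0$. Two features make $\Phi$ the right object. First, $u_t>0$ by Lemma \ref{sugar}(iii), so $\Phi>0$ throughout, and $\Phi(\cdot,0)\ge\delta_0$ as above. Second, and crucially, the weight $e^{-qu}$ is tuned to the boundary flux: differentiating $\partial u/\partial\eta=e^{qu}$ in $t$ gives $\partial u_t/\partial\eta=qe^{qu}u_t$ on $\partial B_R$, and a one-line computation then yields $\partial\Phi/\partial\eta=0$ on $\partial B_R$. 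Thus in the $\Phi$ variable the destabilising positive feedback of the boundary condition disappears and we are left with a homogeneous Neumann condition. Using $u_{tt}=\Delta u_t+\lambda p e^{pu}u_t$ and $\Delta u=u_t-\lambda e^{pu}$, I would check that $\Phi$ solves
$$\Phi_t-\Delta\Phi-2q\nabla u\cdot\nabla\Phi-c\,\Phi=0,\qquad c=\lambda(p-q)e^{pu}+q^2|\nabla u|^2,$$
with bounded drift and zeroth-order coefficients on every slab $\overline B_R\times[0,t_0]$, $t_0<T$. When $c\ge0$, positivity of $\Phi$ gives $c\Phi\ge0$, so $\Phi$ is a supersolution of the drift-diffusion operator; the interior minimum principle together with Hopf's lemma at the Neumann boundary then shows that $t\mapsto\min_{\overline B_R}\Phi(\cdot,t)$ cannot fall below its initial value $\delta_0$. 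This yields $\Phi\ge\delta_0$, i.e. $u_t\ge\delta_0 e^{qu}$ on $\overline B_R\times[0,T)$, in particular at $r=R$, which is all the integration step needs.

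The main obstacle is the sign of $c=\lambda(p-q)e^{pu}+q^2|\nabla u|^2$. If $p\ge q$ then $c\ge0$ and the supersolution argument runs verbatim. If $p<q$, however, $c$ can be negative — for instance near the centre $r=0$, where $\nabla u$ vanishes and $c=\lambda(p-q)e^{pu}<0$ — so $\Phi$ need not be a supersolution and $\min\Phi$ could a priori drift downwards. What I would exploit is that only the boundary value $\Phi(R,t)$ enters the final estimate, and there $|\nabla u|^2=u_r(R,t)^2=e^{2qu(R,t)}$ is enormous, so $c$ is large and positive precisely where it is needed. Concretely, at an interior spatial minimum of $\Phi$ one gets $(\min_x\Phi)'\ge c_{\min}(t)\min_x\Phi$, and since $c\ge-\lambda(q-p)e^{pM(t)}$ this gives $\min\Phi(\cdot,t)\ge\delta_0\exp\!\big(-\lambda(q-p)\int_0^t e^{pM}\big)$; one then has to show $\int_0^T e^{pM}<\infty$, which in the regime $p<q$ is borderline consistent with the very estimate being proved, so a short bootstrap (or a localisation/barrier argument near $\partial B_R$) is required. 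Everything else — the $\Phi$-equation, the Neumann reduction, and the final ODE integration — is routine.
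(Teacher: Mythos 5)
Your reduction (prove $u_t\ge\delta e^{qu}$ at $r=R$, then integrate in time) and your use of (\ref{Batre}) to seed the bound at $t=0$ match the paper's strategy, and your computation for $\Phi=e^{-qu}u_t$ — the equation $\Phi_t-\Delta\Phi-2q\nabla u\cdot\nabla\Phi-c\,\Phi=0$ with $c=\lambda(p-q)e^{pu}+q^2|\nabla u|^2$, and the homogeneous Neumann condition obtained by differentiating the boundary condition in $t$ — is correct. But your proof is only complete when $p\ge q$. For $p<q$ you identify the obstruction yourself ($c<0$ where $\nabla u$ is small), and the repair you sketch is circular: the Gronwall bound requires $\int_0^T e^{pM(t)}\,dt<\infty$, which in the regime $p<q$ is essentially equivalent to the estimate (\ref{polll}) being proved ($e^{pM}\le C^p(T-t)^{-p/q}$ is integrable precisely because $p<q$), and no a priori upper bound on $M$ is available to start a bootstrap — Theorem \ref{pole} supplies only a lower bound. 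So the case $p<q$, which is exactly the case where the boundary term dominates, remains unproved.

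The root of the difficulty is that you propagate a strictly positive threshold $\Phi\ge\delta_0$: since the equation for $\Phi$ is linear and homogeneous, the zeroth-order term $c\,\Phi$ does not vanish at the threshold, so you are forced to control the negative part of $c$. The paper avoids this entirely by taking $J=u_t-\varepsilon u_r$ instead. Both $u_t$ and $u_r$ satisfy the linearized equation up to a term of favorable sign, namely $(\partial_t-\Delta-\lambda pe^{pu})u_t=0$ and $(\partial_t-\Delta-\lambda pe^{pu})u_r=-\frac{n-1}{r^2}u_r$, so $J_t-\Delta J-\lambda pe^{pu}J=\varepsilon\frac{n-1}{r^2}u_r\ge 0$; on the boundary, $u_{rt}=qe^{qu}u_t$ together with the PDE gives $\frac{\partial J}{\partial\eta}\ge\bigl(qe^{qu}-\varepsilon\bigr)u_t\ge 0$ once $\varepsilon\le qe^{qu_0(R)}$. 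The threshold being propagated is now \emph{zero}, so the sign of the potential $\lambda pe^{pu}$ is irrelevant — it is bounded on every slab $\overline B_R\times[0,t_0]$, $t_0<T$, and the zeroth-order term vanishes where $J=0$ — and the maximum principle yields $J\ge 0$ for all $p,q>0$ with no case distinction. Since $u_r=e^{qu}$ holds exactly at $r=R$, this gives the same inequality $u_t(R,t)\ge\varepsilon e^{qu(R,t)}$ that your integration step needs; replacing your $\Phi$ by this $J$ closes your argument in all cases.
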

\begin{proof}
 
Define the function $J$ as follows $$J(x,t)=u_t(r,t)-\varepsilon u_r(r,t),\quad (x,t) \in B_R \times (0,T).$$ 

Since $u_0(r)$ is bounded in $B_R,$ and by (\ref{Batre}), for some $\varepsilon >0,$ we have $$J(x,0)=\Delta u_0(r)+f(u_0(r))-\varepsilon u_{0r}(r)\ge 0, \quad x \in \overline{B}_R.$$
A simple computation shows  \begin{eqnarray*} J_t&=&u_{rrt}+\frac{n-1}{r}u_{rt}+\lambda pe^{pu}u_t-\varepsilon u_{rt},\\ J_r&=&u_{tr}-\varepsilon u_{rr}, \\ J_{rr}&=&u_{trr}-\varepsilon u_{tr}+\varepsilon\frac{n-1}{r} u_{rr}-\varepsilon \frac{(n-1)}{r^2} u_r +\varepsilon \lambda pe^{pu} u_r.  \end{eqnarray*}   From above, it follows that
$$J_t-J_{rr}-\frac{n-1}{r}J_r-\lambda pe^{pu}J=\varepsilon \frac{(n-1)}{r^2} u_r\ge 0,$$
i.e. $$J_t-\Delta J-\lambda pe^{pu}J\ge 0,\quad (x,t)\in B_R\times (0,T).$$ 

Moreover, \begin{eqnarray*}\frac{\partial J}{\partial \eta}|_{x\in \partial B_R}&=&u_{rt}(R,t)-\varepsilon u_{rr}(R,t)\\  &=&qe^{qu(R,t)}u_t-\varepsilon [u_t(R,t)-\frac{n-1}{r}u_r(R,t) -\lambda e^{pu(R,t)}]\\ &\ge & [qe^{qu(R,t)}-\varepsilon]u_t(R,t) \end{eqnarray*} 
Since, $u_t>0$ in $\overline B_R\times (0,T),$ if follows that  
$$\frac{\partial J}{\partial \eta}\ge 0,\quad \mbox{on}\quad \partial B_R\times (0,T),$$
provided $\varepsilon\le qe^{\{qu_0(R)\}}.$  

Since $e^{pu}$ is bounded on $B_R \times (0,t]$ for $t<T,$ from maximum principle \cite{21} and above, we have $$J\ge 0,\quad (x,t)\in \overline{B}_R \times (0,T).$$
In particular, $J(x,t)\ge 0$ for $x \in \partial B_R,$ that is $$u_t(R,t)\ge \varepsilon u_r(R,t)=\varepsilon e^{qu(R,t)},\quad t \in (0,T).$$
Upon integration the above inequality from $t$ to $T$ and since $u$ blows up at $R,$ it follows that 
$$e^{qu(R,t)}\le \frac{1}{q \varepsilon(T-t)}, \quad t \in (0,T),$$ or $$u(R,t)\le \log {C}-\frac{1}{q}\log (T-t), \quad t \in (0,T).$$
\end{proof}

\begin{remark} The upper blow-up rate estimate for problem (\ref{B9}), which has been derived in Theorem \ref{poll}, is governed by the boundary term even in case $p>q.$
On the other hand, it is known that the upper blow-up bound of problem (\ref{B9}), where $\lambda=0$ (see \cite{17}) takes the form: 
$$u(R,t)\le \log \frac{C}{(T-t)^{\frac{1}{2q}}}.$$
Therefore, we conclude that the presence of the reaction term has an important effect on the upper blow-up rate estimate.   
  \end{remark}
\section{Blow-up Set}
We shall prove in this section that the blow-up to problem (\ref{B9}) occurs only on the boundary, restricting ourselves to the special case $p=q=1$ with some restriction assumption on $\lambda.$\begin{theorem}\label{pol}
Suppose that the function $u(x,t)$ is $C^{2,1}(\overline{B}_R\times[0,T)),$ and satisfies \begin{equation*}\left.  \begin{array}{ll} u_t=\Delta u +\lambda e^{u},& (x,t) \in B_R \times (0,T),\\
u(x,t)\le \log \frac{C}{(T-t)},& (x,t) \in \overline{B}_R \times (0,T),\\
u(x,0)=u_0(x),& x\in \Omega,\end{array}\right\} \end{equation*} where 
\begin{equation}\label{Fc}\lambda [4R^2(n+1)+1]\le \min \left\{ \frac{1}{C}, \frac{4(n+1)}{[R^2+4(n+1)T]}e^{-||u_0||_\infty} \right\},\end{equation} $C<\infty.$
Then for any $0\le a<R,$ there exist a positive constant $A$ such that 
$$u(x,t) \le \log [\frac{1}{A(R^2-r^2)^2}]< \infty \quad \mbox{for}\quad  0\le|x|\le a <R, 0<t<T.$$ 
\end{theorem}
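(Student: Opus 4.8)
The plan is to establish the interior bound by a comparison argument, exhibiting an explicit supersolution of the equation that is finite away from $\partial B_R$. Motivated by the shape of the claimed estimate, and by the fact that on the lateral boundary $u$ is controlled by the standing hypothesis $u\le\log[C/(T-t)]$, I would introduce the radial function
\[ \bar u(x,t)=\log\frac{C}{(T-t)+\kappa(R^2-r^2)^2},\qquad r=|x|, \]
where $\kappa>0$ is a constant to be fixed. Writing $D=(T-t)+\kappa(R^2-r^2)^2$, the function $\bar u$ interpolates between $\log[C/(T-t)]$ on $\{r=R\}$ and the finite value $\log[C/(\kappa(R^2-r^2)^2)]$ as $t\to T$ for $r<R$, which is exactly the target form with $A=\kappa/C$.

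First I would verify that $\bar u$ is a supersolution, i.e. $\bar u_t-\Delta\bar u-\lambda e^{\bar u}\ge 0$ on $B_R\times(0,T)$. A direct computation gives $\bar u_t=1/D$, $e^{\bar u}=C/D$, and $\Delta\bar u=\kappa[4nR^2-4(n+2)r^2]/D+16\kappa^2 r^2(R^2-r^2)^2/D^2$; after multiplying through by $D>0$ the required inequality collapses to an algebraic one of the form $1-\lambda C-\kappa\,\Phi(r,t)\ge 0$, where $\Phi$ is bounded by a constant multiple of $R^2(n+1)$ uniformly on the cylinder (using $(R^2-r^2)^2/D\le 1/\kappa$ to absorb the quadratic gradient contribution $16\kappa^2 r^2(R^2-r^2)^2/D$). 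This is precisely the content of the first alternative in the smallness condition (\ref{Fc}), $\lambda[4R^2(n+1)+1]\le 1/C$, so the reaction term $\lambda e^{\bar u}$ is dominated and $\bar u$ is a genuine supersolution.

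Next I would check the ordering $u\le\bar u$ on the parabolic boundary. On $\{r=R\}$ one has $\bar u(R,t)=\log[C/(T-t)]\ge u(R,t)$ by hypothesis. On $\{t=0\}$ the requirement $\bar u(x,0)=\log[C/(T+\kappa(R^2-r^2)^2)]\ge u_0(x)$ amounts to $C\ge e^{\|u_0\|_\infty}(T+\kappa R^4)$, and a $\kappa>0$ for which this holds simultaneously with the supersolution bound above exists exactly because of the second alternative in (\ref{Fc}), involving $\tfrac{4(n+1)}{R^2+4(n+1)T}e^{-\|u_0\|_\infty}$. With both inequalities in force, the comparison principle for the semilinear problem (the maximum principle, as in the proofs of Theorems \ref{pole1} and \ref{poll}) yields $u\le\bar u$ throughout $\overline B_R\times(0,T)$.

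Finally, for $0\le|x|\le a<R$ I would drop the nonnegative term $(T-t)$ in the denominator of $\bar u$ to obtain $u(x,t)\le\log[C/(\kappa(R^2-r^2)^2)]=\log[1/(A(R^2-r^2)^2)]<\infty$ with $A=\kappa/C$, which is the assertion and shows the blow-up set is confined to $\partial B_R$. The main obstacle I anticipate is the verification of the supersolution inequality: because $\lambda>0$ the reaction term $+\lambda e^{u}$ pushes in the wrong direction (unlike the absorption case $\lambda<0$ of \cite{50}), and no stationary barrier blowing up at the boundary can exist, so the time-dependent denominator is indispensable. It is only the combination of that denominator, the restriction $p=q=1$ (so that $e^{\bar u}$ enters linearly and the algebra closes), and the quantitative smallness (\ref{Fc}) of $\lambda$ that makes $\bar u$ a true supersolution; pinning down the explicit constant so that it matches the factor $4R^2(n+1)+1$ is the one genuinely delicate computation.
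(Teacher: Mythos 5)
Your construction is the paper's own barrier in a fixed normalization: the paper compares $u$ with $z=\log\frac{1}{A(R^2-r^2)^2+B(T-t)}$, where \emph{both} constants $A,B$ are free parameters, and your $\bar u$ is exactly this $z$ with $A=\kappa/C$ and the time coefficient frozen at $B=1/C$. Your computation of $\bar u_t$, $\Delta \bar u$, the absorption of the gradient term via $(R^2-r^2)^2/D\le 1/\kappa$, the use of the first alternative of (\ref{Fc}) to get $\lambda C<1$, and the lateral-boundary check are all correct and reproduce the paper's supersolution inequality (whose precise form there is $B\ge A[4R^2(n+1)+1]$ together with $A\ge\lambda$).

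The normalization $B=1/C$ is where the argument has a genuine gap, at $t=0$. Your initial comparison requires $Ce^{-\|u_0\|_\infty}\ge T+\kappa R^4$ for some $\kappa>0$, hence the \emph{strict} inequality $Ce^{-\|u_0\|_\infty}>T$. The hypotheses only give $Ce^{-\|u_0\|_\infty}\ge T$ (let $t\to 0^+$ in $u\le\log[C/(T-t)]$), and equality is admissible: take $u_0\equiv\log(C/T)$ and the spatially constant solution of $y'=\lambda e^{y}$, $y(0)=\log(C/T)$; since (\ref{Fc}) forces $\lambda C<1$, this solution exists on $[0,T)$, satisfies all hypotheses of Theorem \ref{pol}, and leaves no room for any $\kappa>0$. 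Your appeal to the second alternative of (\ref{Fc}) cannot close this gap, because that alternative is a smallness condition on $\lambda$ alone and contains no information about $C$. The paper's proof succeeds precisely because $B$ is not tied to $1/C$: it takes $A=\lambda$ and $B=\lambda[4R^2(n+1)+1]$, which (\ref{Fc}) guarantees is $\le\min\{1/C,\ \tfrac{4(n+1)}{R^2+4(n+1)T}e^{-\|u_0\|_\infty}\}$; the second alternative then yields exactly the initial comparison $AR^4+BT\le B\,\tfrac{R^2+4(n+1)T}{4(n+1)}\le e^{-\|u_0\|_\infty}$, with no relation between $C$ and $T$ ever needed. Your scheme can be repaired while keeping your normalization by first enlarging $C$ to $C'=1/(\lambda[4R^2(n+1)+1])\ge C$ (the assumed bound on $u$ only weakens), for which the second alternative of (\ref{Fc}) does give $C'e^{-\|u_0\|_\infty}\ge T+\tfrac{R^2}{4(n+1)}>T$; but as written, the step at $t=0$ fails.
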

\begin{proof}
Let \begin{eqnarray*} v(x)&=&A(R^2-r^2)^2,\quad r=|x|,\quad 0\le r \le R,\\ z(x,t)=z(r,t)&=&\log \frac{1}{[v(x)+B(T-t)]},\quad \mbox{in}\quad \overline{B}_R \times (0,T),\end{eqnarray*}
where $B>0,A\ge\lambda .$  

A direct calculation shows that \begin{eqnarray*} 
z_t&=&\frac{B}{[v(x)+B(T-t)]},\\ z_r&=&\frac{4rA(R^2-r^2)}{[v(x)+B(T-t)]}, \\ z_{rr}&=&\frac{[v(x)+B(T-t)][4A(R^2-3r^2)]+16A^2r^2(R^2-r^2)^2}{[v(x)+B(T-t)]^2}.\end{eqnarray*}
Thus \begin{eqnarray*}z_t-z_{rr}-\frac{n-1}{r}z_r-\lambda e^z&=&\frac{[B-4A(n-1)(R^2-r^2)-\lambda ][v(x)+B(T-t)]}{[v(x)+B(T-t)]^2}\\ &&-\frac{[4A(R^2-3r^2)][v(x)+B(T-t)] +16Ar^2v(x)}{[v(x)+B(T-t)]^2} \\
&\ge&\frac{[B-4A(n-1)(R^2-r^2)-\lambda -4A(R^2-3r^2)-16Ar^2]v(x)}{[v(x)+B(T-t)]^2} \\ &\ge&\frac{[B-4AR^2n-4AR^2-\lambda]v(x)}{[v(x)+B(T-t)]^2}\\ &\ge&\frac{[B-4AR^2n-4AR^2-A]v(x)}{[v(x)+B(T-t)]^2}\ge 0
\end{eqnarray*} provided $$B\ge A[4R^2(n+1)+1].$$ i.e. $$z_t-\Delta z-\lambda e^z\ge 0,\quad \mbox{in}\quad B_R\times (0,T)$$

Moreover,\begin{equation*}\begin{array}{llll} z(x,0)=\log \frac{1}{[v(x)+BT]}&\ge \log \frac{1}{[AR^4+BT]}&\ge u(x,0),& x \in {B}_R, \\
z(R,t)=\log \frac{1}{B(T-t)}&\ge \log \frac{C}{(T-t)}& \ge u(R,t),&t \in (0,T) \end{array} \end{equation*}
 provided 
$$B\le \min \left\{ \frac{1}{C}, \frac{4(n+1)}{R^2+4(n+1)T}e^{-||u_0||_\infty} \right\}.$$  
From above, and the comparison principle \cite{21}, we obtain $$z(x,t)\ge u(x,t) \quad \mbox{in}\quad B_R \times (0,T).$$ Thus $$u(x,t)\le \log [\frac{1}{A(R^2-r^2)^2}]< \infty \quad \mbox{for}\quad  0\le |x|\le a <R, 0<t<T.$$  
 \end{proof}
\begin{remark} From Theorem \ref{pol} and the upper blow-up rate estimate (\ref{polll}), it followes that,  for the special case of problem (\ref{B9}) ($p=q=1$ and $\lambda$ satisfies (\ref{Fc})), the blow-up occurs only on the boundary. Therefore, we conclude that, the blow-up set of (\ref{B9}), where $\lambda$ is small enough, is the same that of (\ref{B9}), where $\lambda=0$ ( see \cite{17}).
\end{remark}

\end{document}